\newtheorem{theorem}{Theorem}[section]
\newtheorem{lemma}[theorem]{Lemma}
\newtheorem{proposition}[theorem]{Proposition}
\newtheorem{fact}[theorem]{Fact}
\newtheorem{question}[theorem]{Question}
\theoremstyle{definition}
\newtheorem{definition}[theorem]{Definition}
\newtheorem{corollary}[theorem]{Corollary}	
\newtheorem{conjecture}[theorem]{Conjecture}
\theoremstyle{remark}
\newtheorem{remark}[theorem]{Remark}
\numberwithin{equation}{section}
\newcommand{\eps}{\varepsilon}
\DeclareMathOperator{\diam}{diam}
\begin{document}

\title{Fractal Curves and Rugs of Prescribed Conformal Dimension}
\author[rvt]{Claudio A. DiMarco}
\address[rvt]{8-537, 1000 E Henrietta Rd, Mathematics Department, Monroe Community College,
Rochester, NY 14623}
\ead{cdimarco2@monroecc.edu}
\begin{keyword}
metric space, Cantor sets, Hausdorff dimension, conformal dimension, topological dimension, quasisymmetric map. \\
\MSC[2010]{Primary 28A80, 30L10; Secondary 28A78, 54F45}
\end{keyword}
\date{\today}

\begin{abstract}
We construct Jordan arcs of prescribed conformal dimension which are ``minimal for conformal dimension," meaning the Hausdorff and conformal dimensions are equal.  These curves are used to design fractal rugs, similar to Rickman's rug, that are also minimal for conformal dimension.  These fractal rugs could potentially settle a standing conjecture regarding the existence of metric spaces of prescribed topological conformal dimension.
\end{abstract}

\maketitle

\section{Introduction}

Let $(X,d)$ be a metric space.  The subscripts of dim indicate the type of dimension, and we set $\dim\varnothing=-1$ for every dimension.

Quasisymmetric maps form an interesting intermediate class lying between homeomorphisms and bi-Lipschitz maps~\cites{JH,GV}.  Topological dimension is invariant under homeomorphisms, and Hausdorff dimension is bi-Lipschitz invariant.  Conformal dimension  classifies metric spaces up to quasisymmetric equivalence~\cite{MT}:
\begin{definition}
The \textit{conformal dimension of $X$} is
\begin{equation*}
\dim_C X = \inf\{\dim_H f(X):f~\text{is quasisymmetric}\}.
\end{equation*}
\end{definition}
\noindent It is clear from the definition that conformal dimension is invariant under quasisymmetric maps, and hence under bi-Lipschitz maps.

Pansu introduced conformal dimension in 1989~\cite{PP}, and the concept has been widely studied since.  The primary applications of the theory of conformal dimension are in the study of Gromov hyperbolic spaces and their boundaries.  The boundary of a Gromov hyperbolic space admits a family of metrics which are not bi-Lipschitz equivalent, but quasisymmetrically equivalent.  Consequently, the conformal dimension of the boundary is well-defined, unlike its Hausdorff dimension~\cite{MT}.  Recent advancements involving applications of conformal dimension are exposed in \cite{MB} and \cite{BK}.  Determining the conformal dimension of the Sierpinsk\'i carpet (denoted $\dim_C SC$) is an open problem, but in \cite{KL} Keith and Laakso proved that $\dim_C SC<\dim_H SC.$  Kovalev proved a conjecture of Tyson: conformal dimension does not take values strictly between $0$ and $1$~\cite{LK}.  In \cite{HH} Hakobyan proved that if $E\subset\mathbb{R}$ is a uniformly perfect middle-interval Cantor set, then $\dim_H E=\dim_C E$ if and only if  $\dim_H E=1.$

\begin{definition}
A metric space $X$ is called \textit{minimal for conformal dimension} if $\dim_C X=\dim_H X.$
\end{definition}

In \cite{CAD} topological conformal dimension was defined; it is an adaptation of \textit{topological Hausdorff dimension} which was defined in \cite{BBE2} as
\begin{equation*}
\dim_{tH}X = \inf\{d:X~\text{has a basis}~\mathcal{U}~\text{such that}~ \dim_H \partial U \leq d-1~\text{for all}~ U\in\mathcal{U}\}.
\end{equation*}
\begin{definition}
The topological conformal dimension of $X$ is
\begin{equation*}
\dim_{tC}X = \inf\{d: X~\text{has basis}~\mathcal{U}~\text{such that} ~\dim_C \partial U \leq d-1~\text{for all}~U\in\mathcal{U}\}.
\end{equation*}
\end{definition}
There is a key difference between conformal dimension and tC-dimension.  Lower bounds for the former can be obtained through the presence of ``diffuse" families of curves, while diffuse families of surfaces provide lower bounds for the latter.  For precise statements, see Theorem 4.5 in \cite{CAD} and Proposition 4.1.3 in \cite{MT}.  While Fact 4.1 in \cite{CAD} shows $\dim_{tC}X\in \{-1,0,1\}\cup [2,\infty],$ it is unknown whether tC-dimension attains all values in $[2,\infty].$

The following conjecture was posed in \cite{CAD}:
\begin{conjecture}\label{conjecture1}
For every $d\in[2,\infty]$ there is a metric space $X$ with $\dim_{tC}X=d.$
\end{conjecture}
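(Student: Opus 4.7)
The plan is to deploy the Jordan arcs and rugs of prescribed conformal dimension constructed in this paper. Given $d \in [2,\infty]$, set $s = d-1 \in [1,\infty]$, and let $\Gamma$ be the Jordan arc with $\dim_C \Gamma = \dim_H \Gamma = s$ supplied by the paper's main curve construction. Form the corresponding rug $X$ on $\Gamma \times [0,1]$, which is topologically two-dimensional, minimal for conformal dimension, and satisfies $\dim_H X = \dim_C X = d$. The candidate assertion is $\dim_{tC} X = d$.

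For the upper bound $\dim_{tC} X \leq d$, I would exhibit a basis whose elements have boundaries of small conformal dimension. A natural choice is the family of ``product rectangles'' $A \times J$, where $A$ ranges over open subarcs of $\Gamma$ and $J$ over open subintervals of the transverse factor $[0,1]$. The boundary decomposes as
\begin{equation*}
\partial(A \times J) = (\overline{A} \times \partial J) \cup (\partial A \times \overline{J}),
\end{equation*}
a union of two closed subarcs of $\Gamma$ (each of Hausdorff dimension at most $s$) together with two line segments (of dimension $1 \leq s$). Hence $\dim_C \partial(A \times J) \leq \dim_H \partial(A \times J) \leq s = d - 1$, which gives $\dim_{tC} X \leq d$.

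The lower bound $\dim_{tC} X \geq d$ is more delicate: one must show that every basis $\mathcal V$ of $X$ contains some $V$ with $\dim_C \partial V \geq s$. Here I would invoke the diffuse-family criterion (Theorem 4.5 in \cite{CAD}). The rug $X$ carries the transverse pencil of fibers $\Gamma \times \{t\}$, each an isometric copy of $\Gamma$ of conformal dimension $s$. A separation argument ought to show that, whenever $V \in \mathcal V$ is a sufficiently small neighborhood of a chosen interior point, $\partial V$ must cross a positive-measure set of these fibers in subarcs that inherit $\Gamma$'s quasisymmetric ``fullness''; the minimality of $\Gamma$ for conformal dimension then propagates to $\dim_C \partial V \geq s$.

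The main obstacle is precisely this lower bound. Formalizing a ``diffuse family of rugs'' argument appropriate for tC-dimension---an analog for two-dimensional objects of the Mackay--Tyson curve-modulus estimates (Proposition 4.1.3 in \cite{MT}) used to lower-bound conformal dimension---appears to be the crux, and is presumably why the abstract calls the settling of Conjecture \ref{conjecture1} only ``potential.'' One must rule out clever bases that route their boundaries around the fractal structure of the $\Gamma$-fibers, and this requires a quantitative transfer of conformal-dimension lower bounds through slicing. The endpoint case $d = \infty$ should follow by applying the same construction to a Jordan arc of infinite conformal dimension, or by an inverse/countable combination of rugs whose conformal dimensions tend to infinity.
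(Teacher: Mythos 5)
What you have labeled a proof is, as you yourself concede midway through, not one: the statement is precisely the paper's Conjecture~\ref{conjecture1}, which the paper does not prove and does not claim to prove. Your strategy coincides with the one the paper proposes: take the Jordan arc of prescribed conformal dimension from Theorem~\ref{lemma1}, form the rug $R_{c-1}=V_{c-1}\times[0,1]$, and hope that $\dim_{tC}R_{c-1}=\dim_H R_{c-1}=c$. Your upper bound is fine and is essentially what the paper gets from Theorem 3.7 in \cite{CAD} (a basis of product rectangles $A\times J$ whose boundaries are subarcs of the fiber arc together with segments, hence of conformal dimension at most $d-1$). But the lower bound, which you correctly identify as the crux, is exactly the open problem, and your sketch of it does not survive scrutiny.

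Concretely, the step ``$\partial V$ must cross a positive-measure set of fibers in subarcs that inherit $\Gamma$'s quasisymmetric fullness, and minimality then propagates to $\dim_C\partial V\geq s$'' fails on two counts. First, the topological separation argument only forces $\partial V$ to \emph{meet} each fiber $\Gamma\times\{t\}$ that runs from inside $V$ to outside; it does not force $\partial V$ to contain any subarc of any fiber --- the intersection can be a single point or a totally disconnected set per fiber, and an arbitrary basis element need not be a product. Second, even if $\partial V$ did contain subarcs of fibers, conformal dimension is monotone under inclusion in the wrong direction for your purpose: subsets of a minimal space can have much smaller conformal dimension. Indeed the arc $\Lambda=\overline{\Gamma}$ constructed in Lemma~\ref{lemma3} contains countably many smooth connecting curves, so it has plenty of subarcs of conformal dimension $1$; a clever basis could route boundaries through $(\Gamma\setminus(E\times Y))\times[0,1]$ and the ``fullness'' you want simply is not inherited. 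What is missing is a quantitative lower-bound mechanism for $\dim_C$ of separating sets --- an analog for surfaces of the diffuse curve-family/modulus estimates (Proposition 4.1.3 in \cite{MT}, Theorem 4.5 in \cite{CAD}) --- and the paper states explicitly that no such tool is available; this is why $\dim_{tC}R_d$ is left as an open question. (Your endpoint case $d=\infty$ also outruns the paper: Lemma~\ref{lemma2} is stated for $0<d<\infty$, so even the arc of infinite conformal dimension would need a separate argument.) In short: your proposal restates the paper's intended program, but the statement remains a conjecture, and the gap you wave at is the entire content of it.
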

\noindent In this paper we provide examples of fractal spaces that could potentially settle Conjecture \ref{conjecture1}.  To this end, it seems appropriate to consider topological squares that are not quasisymmetrically equivalent to $[0,1]^2.$  A classical fractal of this kind is \textit{Rickman's rug}, which is the cartesian product of the von Koch snowflake with the standard unit interval.  In general, a \textit{fractal rug} is a product space of the form $R_d=V_d\times [0,1],$ where $V_d$ is a Jordan arc (a space homeomorphic to $[0,1]$) with $d=\dim_C V_d$.  At present, we do not have the tools necessary to determine the tC-dimensions of these fractals, but we suspect that $\dim_H R_d = \dim_{tC}R_d.$  This would be consistent with the fact that $R_d$ is minimal for conformal dimension, which follows from a result of Bishop and Tyson~\cite{MT}.  

Suppose that one prescribes $d>1$, and considers a Jordan arc $V_d$ that enjoys the minimality property $\dim_H V_d = \dim_C V_d$. In this case, it would follow that $\dim_H R_d = d+1.$  If the conjectured equality $\dim_H R_d = \dim_{tC}R_d$ were to hold, we would then have $\dim_{tC} R_d = 1+d,$ which would provide an affirmative answer to the question of existence in Conjecture \ref{conjecture1}. 

In Section \ref{rugs} we discuss fractal rugs and their dimensions in the context of Conjecture \ref{conjecture1}.  In Section \ref{arcs} we construct the Jordan arcs that are discussed in Section \ref{rugs}, which is the main result of the paper:

\begin{theorem}
For every $c\geq 1$ there is a Jordan arc $\Lambda$ with $\dim_C \Lambda=c.$
\end{theorem}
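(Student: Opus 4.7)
The case $c=1$ is handled by $\Lambda=[0,1]$. For $c>1$, my plan is to realize $\Lambda$ as the attractor of a carefully designed self-similar iterated function system (IFS). I would take $N\ge 2$ similarities $f_1,\dots,f_N$ with common contraction ratio $r\in(0,1)$ satisfying $Nr^c=1$. Under the open set condition, Moran's theorem then gives $\dim_H\Lambda=c$ together with Ahlfors $c$-regularity of the natural self-similar measure.

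To make $\Lambda$ a Jordan arc I would arrange the similarities so that $f_i(\Lambda)$ meets $f_{i+1}(\Lambda)$ at exactly one endpoint and $f_i(\Lambda)\cap f_j(\Lambda)=\varnothing$ for $|i-j|\ge 2$. For $c\in(1,2]$, a parametrized generalization of the von Koch construction in $\mathbb{R}^2$ with a tunable angle achieves this. For larger $c$ one passes to higher-dimensional ambient Euclidean spaces (or to an abstract self-similar metric space) and sets up analogous adjacency combinatorics. Checking that $\Lambda$ is homeomorphic to $[0,1]$ is a standard consequence of the adjacency structure plus the open set condition, after which the upper bound $\dim_C\Lambda\le\dim_H\Lambda=c$ is immediate.

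The essential step is the lower bound $\dim_C\Lambda\ge c$. My plan is to invoke a minimality criterion in the spirit of Tyson and Bishop--Tyson (compare \cite{MT}): for an Ahlfors $Q$-regular space, a suitable curve-family modulus estimate forces the conformal dimension to equal $Q$. The self-similar hierarchy of subarcs of $\Lambda$ provides the raw material for such an estimate, and the IFS must be tuned so that the relevant modulus bounds hold uniformly across scales. This is the principal obstacle: lower bounds on conformal dimension are the notoriously hard direction, and ensuring that a one-dimensional topological object such as a Jordan arc carries enough metric modulus to pin its conformal dimension at $c$ is precisely what the construction must engineer. Should a direct modulus approach prove too delicate, a fallback strategy is to design $\Lambda$ so that it contains, or is bi-Lipschitz equivalent to, a space whose conformal dimension $c$ is already established in the literature, transferring minimality by monotonicity of $\dim_C$ under subsets.
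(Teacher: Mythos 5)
Your main route has a fatal obstruction, and it is precisely the step you flag as the ``principal obstacle.'' A self-similar arc built so that $f_i(\Lambda)$ meets $f_{i+1}(\Lambda)$ in a single endpoint and non-consecutive pieces are disjoint is, in the natural examples your recipe produces (von Koch--type curves with a tunable angle, and their higher-dimensional analogues), a quasiarc: it is doubling and of bounded turning, hence by the Tukia--V\"ais\"al\"a characterization it is a quasisymmetric image of $[0,1]$ and so has conformal dimension $1$ regardless of its Hausdorff dimension $c$. Indeed the von Koch curve itself --- the $c\in(1,2)$ instance of your plan --- satisfies $\dim_C=1$ (this is exactly the snowflake phenomenon recalled in Section \ref{rugs}). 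No tuning of the IFS can rescue the modulus argument either: the minimality criteria you invoke (Proposition 4.1.3 in \cite{MT}, Bishop--Tyson) require a diffuse \emph{family} of curves spread over a set of positive measure, and a Jordan arc contains no such family --- its only nondegenerate curves are its own subarcs, every point being a cut point. This is why products $Y\times[0,1]$ are minimal but single arcs are not expected to be detectable by curve modulus; Ahlfors $c$-regularity of the arc does not help (an Ahlfors regular quasiarc still has conformal dimension $1$).

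Your fallback sentence is in fact the germ of the paper's actual proof, but it is not carried out. The paper does not make the arc itself self-similar; instead it first produces a totally disconnected set of conformal dimension $c=1+d$, namely $E\times Y$ where $Y=K_d$ is a self-similar Cantor set of dimension $d$ and $E\subset[0,1]$ is a Cantor set with ratios $c_i\searrow 0$, so that $E$ is uniformly perfect, has dimension $1$, and carries a measure with growth $r^{1\pm\eps}$; Hakobyan's Corollary 5.6 then gives $\dim_C(E\times Y)\geq 1+d$ (this replaces curve modulus by a Fuglede-type modulus adapted to Cantor sets, which is the tool that actually works here). One then threads a Jordan arc $\Lambda$ through $E\times Y$ by joining the generation-$k$ blocks with countably many disjoint smooth curves, checks that the resulting parametrization extends to a homeomorphism of $[0,1]$ onto $\overline{\Gamma}$, and uses countable stability of Hausdorff dimension to see $\dim_H\Lambda=1+d$, whence $\dim_C(E\times Y)\leq\dim_C\Lambda\leq\dim_H\Lambda$ pins $\dim_C\Lambda=c$. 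To complete your proposal along these lines you would need to name such a target space, prove its conformal dimension lower bound, and construct the connecting curves so that they neither destroy injectivity nor raise the Hausdorff dimension above $c$ --- none of which is present in the current sketch.
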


\section{Preliminaries}

The symbol $B(x,\eps)$ denotes the open ball centered at $x$ of radius $\eps.$  For $x\in\mathbb{R}^n,~|x|$ is the Euclidean modulus of $x.$  Unless otherwise stated, distance in the metric space $Y$ is denoted $d_Y.$  To discuss conformal dimension, we need the notion of quasisymmetry.  A quasisymmetric map allows for rescaling with aspect ratio control:
\begin{definition}
An embedding $f:X\rightarrow Y$ is \textit{quasisymmetric} if there is a homeomorphism $\eta:[0,\infty)\rightarrow [0,\infty)$ so that
\begin{equation*}
\frac{d_Y (f(x),f(a))}{d_Y (f(x),f(b))} \leq \eta \left( \frac{d_X(x,a)}{d_X(x,b)}\right)
\end{equation*}
for all triples $a,b,x$ of points in $X$ with $x\neq b$~\cite{MT}.
\end{definition}

Conformal dimension is defined via Hausdorff dimension.  For the latter, recall the following definition.

\begin{definition}
The \textit{p-dimensional Hausdorff measure} of $X$ is
\begin{equation*}
\mathcal{H}^p(X)=\lim_{\delta\rightarrow 0}\inf\left\{\sum_1^{\infty} (\diam E_j)^p: X\subset \bigcup_1^{\infty} E_j\text{ and }\diam E_j\leq\delta~\forall j\right\}.
\end{equation*}
The \textit{Hausdorff dimension} of $X$ is $\dim_H X=\inf\{p:\mathcal{H}^p(X)=0\}.$
\end{definition}
An interesting combination of the Hausdorff and topological dimensions called \textit{topological Hausdorff dimension} was introduced in \cite{BBE2}:
\begin{equation*}
\dim_{tH}X=\inf\{d: X~\text{has a basis}~\mathcal{U}~\text{such that}~\dim_H \partial U \leq d-1~\forall U\in \mathcal{U}\}.
\end{equation*}
In certain favorable circumstances, the Hausdorff and topological Hausdorff dimensions are additive under products.  
For any product space $X\times Y,$ we use the metric
\begin{equation*}
d((x_1,y_1),(x_2,y_2))=\max (d_X(x_1 ,x_2), d_Y(y_1, y_2)).
\end{equation*}
For sake of completeness, we include Theorem 4.21 from \cite{BBE2} and several product formulas for Hausdorff dimension (see e.g. Chapter 7 in \cite{KF}).
\begin{fact}\label{dim_H_lower}
	If $E \subset \mathbb{R}^n, ~ F \subset \mathbb{R}^m$ are Borel sets, then
	\begin{equation*}
	\dim_H (E \times F) \geq \dim_H E + \dim_H F.
	\end{equation*}
\end{fact}
\noindent Let $\overline{\dim}_H X$ be the upper box-counting dimension of $X$ (see e.g. \cite{KF}).
\begin{fact}\label{dim_H_upper}
	For any sets $E \subset \mathbb{R}^n$ and $F \subset \mathbb{R}^m$
	\begin{equation*}
	\dim_H (E \times F) \leq \dim_H E + \overline{\dim}_B F.
	\end{equation*}
\end{fact}
\noindent We call a Cantor set in $[0,1]$ \textit{uniform} if it is constructed in the same way as the usual middle-thirds example, allowing for any scaling factor $0 < r < \nicefrac{1}{2}$. Since uniform Cantor sets have equal Hausdorff and upper box dimensions, Facts \ref{dim_H_lower} and \ref{dim_H_upper} yield the following formula. 
\begin{fact}
	If $F \subset \mathbb{R}$ is a uniform Cantor set, then for any $E \subset \mathbb{R}^n$
	\begin{equation} \label{dim_H_additivity_Cantor}
	\dim_H (E \times F) = \dim_H E + \dim_H F
	\end{equation}
\end{fact}
\noindent In light of Facts \ref{dim_H_lower} and \ref{dim_H_upper}, we observe the following convenient additivity property. 
\begin{fact}\label{dim_H_additive_general}
	If $X\subset \mathbb{R}^n$ and $Y\subset \mathbb{R}^m$ are Borel sets with $\dim_H X = \overline{\dim}_B X$,
	\begin{equation}
\dim_H (X\times Y) = \dim_H X + \dim_H Y. 
	\end{equation}
\end{fact}
\noindent The condition $\dim_H X = \overline{\dim}_B X$ holds for a wide variety of spaces.

\begin{theorem}\label{dim_H_additive}
If $X$ is a nonempty separable metric space, then
\begin{equation}\label{dim_H_additive_equality}
\dim_{tH}(X\times [0,1])=\dim_H (X\times [0,1]) =\dim_H X+1.
\end{equation}
In particular, for any value $c>2,$ $R=X+1$ can be chosen such that $\dim_{tH} R = c.$
\end{theorem}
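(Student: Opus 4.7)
The plan is to establish the chain of equalities in two parts. The identity $\dim_H(X\times[0,1]) = \dim_H X + 1$ is the standard Hausdorff product formula for a factor of positive finite $\mathcal{H}^1$-measure: the upper bound uses $\dim_H(A\times B) \leq \dim_H A + \overline{\dim}_B B$ with $\overline{\dim}_B[0,1]=1$, while the matching lower bound follows from Marstrand's product theorem since $\mathcal{H}^1([0,1])=1$. For the second equality $\dim_{tH}(X\times[0,1]) = \dim_H X + 1$, the upper bound requires exhibiting a specific basis while the lower bound is the main content.

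For the upper bound $\dim_{tH}(X\times[0,1]) \leq \dim_H X + 1$ I would take a basis of cylinder sets $U = B(x_0,\eps) \times (a,b)$, whose boundary decomposes as $(\partial B(x_0,\eps) \times [a,b]) \cup (\overline{B(x_0,\eps)} \times \{a,b\})$. The second piece has Hausdorff dimension at most $\dim_H X$; the first has dimension at most $\dim_H \partial B(x_0,\eps) + 1$, so it suffices to show that a dense set of radii $\eps$ at each basepoint satisfies $\dim_H \partial B(x_0,\eps) \leq \dim_H X - 1$. I would obtain this from the Eilenberg coarea inequality
\begin{equation*}
\int_0^\infty \mathcal{H}^{s-1}\bigl(\{x : d(x,x_0)=t\}\bigr)\,dt \;\leq\; C(s)\,\mathcal{H}^s(X)
\end{equation*}
applied to the $1$-Lipschitz distance function and any $s > \dim_H X$. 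Since $\mathcal{H}^s(X)=0$ the integrand vanishes for a.e.\ $t$, so $\dim_H \partial B(x_0,t) \leq s-1$ for a full-measure set of radii; intersecting over a sequence $s_n\searrow\dim_H X$ delivers a dense set of admissible radii at each $x_0$, and hence a basis with $\sup_U \dim_H\partial U \leq \dim_H X$.

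For the lower bound $\dim_{tH}(X\times[0,1]) \geq \dim_H X + 1$, fix $\alpha < \dim_H X$. By separability and Lindel\"of, the open set $W = \{x \in X : \exists\,\text{open nbhd } V \ni x \text{ with } \dim_H V \leq \alpha\}$ is a countable union of such neighborhoods, hence itself has dimension at most $\alpha$, and is therefore a proper subset of $X$; pick any $x_0 \notin W$, so that every open neighborhood of $x_0$ has dimension strictly greater than $\alpha$. Given any basis $\mathcal{U}$ of $X\times[0,1]$, choose $U \in \mathcal{U}$ with $(x_0,1/2)\in U$ and $\diam U < 1/4$, and fix a product neighborhood $V\times I \subset U$ containing $(x_0,1/2)$. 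For each $y\in V$ the vertical slice $\{t:(y,t)\in U\}$ is a nonempty open subset of $(1/4,3/4)$ and hence a proper open subset of $[0,1]$, so its boundary in $[0,1]$ is nonempty and provides some $t_y$ with $(y,t_y)\in\partial U$. Thus $\pi_X(\partial U)\supset V$, and since the projection onto $X$ is $1$-Lipschitz, $\dim_H\partial U \geq \dim_H V > \alpha$. Letting $\alpha\nearrow\dim_H X$ yields $\sup_{U\in\mathcal{U}}\dim_H\partial U \geq \dim_H X$ and therefore $\dim_{tH}(X\times[0,1]) \geq \dim_H X + 1$.

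The main obstacle is the boundary-dimension estimate in the upper-bound step; the Eilenberg coarea inequality is the crucial non-obvious ingredient, replacing the cleaner picture available when $X$ is Euclidean or totally disconnected (where one can use a clopen basis directly). The ``in particular'' assertion that $\dim_{tH}R$ attains every value greater than $2$ then follows by letting $X$ range over spaces of prescribed Hausdorff dimension $d > 1$, for instance self-similar Cantor sets in $\mathbb{R}$ with dimension tuned continuously in $(1,\infty]$.
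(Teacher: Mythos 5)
The paper does not actually prove this statement: it is quoted as Theorem 4.21 of \cite{BBE2}, and the surrounding text only indicates how it is assembled from cited inequalities --- the general bound $\dim_{tH}\leq\dim_H$, the product estimate $\dim_H(X\times[0,1])\leq\dim_H X+\overline{\dim}_B[0,1]=\dim_H X+1$ (a generalization of Falconer's Product Formula 7.3), and the Balka--Buczolich--Elekes lower bound $\dim_{tH}(X\times[0,1])\geq\dim_H X+1$, which together sandwich all three quantities. Your proposal supplies a self-contained proof, and its core, the lower bound, is correct and is the essential content of the cited result: you locate a point $x_0$ at which $X$ is locally of full dimension (Lindel\"of plus countable stability of $\dim_H$), observe that for a small basis element $U$ around $(x_0,\nicefrac12)$ every vertical slice over $y\in V$ is a nonempty proper open subset of the connected space $[0,1]$ and hence contributes a boundary point, and conclude $\dim_H\partial U\geq\dim_H\pi_X(\partial U)\geq\dim_H V>\alpha$. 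Your upper bound for $\dim_{tH}$ does more work than necessary --- one can simply invoke $\dim_{tH}\leq\dim_H$ together with the product formula --- but the Eilenberg coarea argument producing a dense set of radii with $\dim_H\partial B(x_0,t)\leq\dim_H X-1$ is precisely how that general inequality is established, so nothing is lost, and the overall logical structure (the three-way sandwich) matches the paper's.

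Two loose ends, neither fatal. First, when $\dim_H X<1$ the exponent $s-1$ in your coarea inequality is negative and $\mathcal{H}^{s-1}$ is undefined; you need the separate, easy observation that then $\mathcal{H}^1(X)=0$, so the $1$-Lipschitz image $\{d(x,x_0):x\in X\}$ is Lebesgue-null and almost every sphere $\{x:d(x,x_0)=t\}$ is empty. Second, your closing remark misfires: a Cantor set in $\mathbb{R}$ has Hausdorff dimension at most $1$, so it cannot realize dimensions in $(1,\infty]$. The ``in particular'' clause refers to Rickman's rug $R=[0,1]^{\eps}\times[0,1]$, for which $\dim_H[0,1]^{\eps}=\eps^{-1}$ sweeps out $(1,\infty)$ as $\eps$ ranges over $(0,1)$; alternatively one may take Cantor sets in $[0,1]^n$ for large $n$ as in Remark \ref{r1}.
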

\noindent The first inequality in \eqref{dim_H_additive_equality} is due to Balka, Buczolich, and Elekes~\cite{BBE2}.  The second inequality is a generalization of Fact \ref{dim_H_upper}, which is a well-known result.

Hausdorff dimension is invariant under \textit{bi-Lipschitz maps}.
\begin{definition}
An embedding $f$ is $L$-\textit{bi-Lipschitz} if both $f$ and $f^{-1}$ are $L$-Lipschitz, and we say $f$ is \textit{bi-Lipschitz} if it is $L$-bi-Lipschitz for some $L$.
\end{definition}
\noindent Every bi-Lipschitz map is quasisymmetric, but not every quasisymmetric map is bi-Lipschitz.

We are now prepared to define conformal dimension, which measures the distortion of Hausdorff dimension by quasisymmetric maps.
\begin{definition}\label{def_conf_dim} The \textit{conformal dimension of $X$} is
\begin{equation*}
\dim_C X = \inf\{\dim_H f(X):f~is~quasisymmetric\}.
\end{equation*}
\end{definition}
\noindent In case $\dim_C X = \dim_H X$ we say that $X$ is \textit{minimal for conformal dimension}.  Bishop and Tyson proved that for every compact set $Y\subset \mathbb{R}^n,$ the space $Z = Y\times [0,1]$ is minimal for conformal dimension~\cite{MT}.  The following string of inequalities is a useful tool for determining dimensions.  The first two comprise Proposition 2.2 in \cite{CAD}, while the third is evident considering Definition \ref{def_conf_dim}.

\begin{proposition}\label{my_prop}
If $X$ is a metric space, then
\begin{equation*}
\dim_t X\leq \dim_{tC} X\leq \dim_C X \leq \dim_H X.
\end{equation*}
\end{proposition}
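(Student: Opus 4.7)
My plan is to treat the three inequalities in order of increasing difficulty. The rightmost inequality $\dim_C X \leq \dim_H X$ is immediate from Definition~\ref{def_conf_dim}: the identity $\id_X$ is quasisymmetric (take $\eta(t)=t$), so $\dim_H X$ lies in the set over which the defining infimum is taken.

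For the leftmost inequality $\dim_t X \leq \dim_{tC} X$, the first step is the pointwise comparison $\dim_t Y \leq \dim_C Y$, valid for every metric space $Y$. This combines two classical facts: topological dimension is a homeomorphism invariant, and $\dim_t Y' \leq \dim_H Y'$ always holds. Since every quasisymmetric $f$ is a homeomorphism onto its image, $\dim_t Y = \dim_t f(Y) \leq \dim_H f(Y)$, and taking the infimum over $f$ yields $\dim_t Y \leq \dim_C Y$. Applied to each $\partial U$ in a basis $\mathcal{U}$ of $X$ witnessing $\dim_{tC} X \leq d$, this gives $\dim_t \partial U \leq d-1$; the inductive characterization of topological dimension in separable metric spaces then forces $\dim_t X \leq d$, and passing to the infimum over admissible $d$ completes the step.

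The middle inequality $\dim_{tC} X \leq \dim_C X$ is the substantive part. Fix a quasisymmetric embedding $f \colon X \to Y$ with image $Y' = f(X)$. For any basis $\mathcal{V}$ of $Y'$, the family $\mathcal{U} = \{f^{-1}(V) : V \in \mathcal{V}\}$ is a basis of $X$; since $f$ is a homeomorphism, $f(\partial f^{-1}(V)) = \partial V$, and the restriction of $f$ to $\partial f^{-1}(V)$ is quasisymmetric onto $\partial V$, so $\dim_C \partial f^{-1}(V) \leq \dim_H \partial V$. Hence any $\mathcal{V}$ witnessing $\dim_{tH} Y' \leq d$ produces a $\mathcal{U}$ witnessing $\dim_{tC} X \leq d$, whence $\dim_{tC} X \leq \dim_{tH} Y'$. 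It remains to establish the universal bound $\dim_{tH} Y' \leq \dim_H Y'$ via a Fubini/coarea-type slicing: for each $y \in Y'$ and Lebesgue-a.e.\ radius $r$, the metric sphere $\partial B(y,r)$ in $Y'$ satisfies $\dim_H \leq \dim_H Y' - 1$, yielding a basis of $Y'$ whose boundaries have arbitrarily small excess Hausdorff dimension. Combining $\dim_{tC} X \leq \dim_{tH} Y' \leq \dim_H Y' = \dim_H f(X)$ and taking the infimum over quasisymmetric $f$ delivers $\dim_{tC} X \leq \dim_C X$. The principal obstacle is precisely this slicing estimate in an abstract metric space; in Euclidean contexts it is routine Fubini, while in the general case one would appeal to the corresponding assertion of Balka, Buczolich, and Elekes~\cite{BBE2} rather than rederive it.
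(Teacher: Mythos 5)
Your argument is correct, but be aware that the paper does not actually prove this proposition: it cites Proposition~2.2 of \cite{CAD} for the first two inequalities and merely observes that the third is evident from Definition~\ref{def_conf_dim}, exactly as you handle it via the identity map. Your reconstruction of the two cited inequalities is sound and follows what is essentially the standard route: for $\dim_t X\leq \dim_{tC}X$ you combine the classical inequality $\dim_t\leq\dim_H$ with homeomorphism invariance of $\dim_t$ to get $\dim_t\leq\dim_C$ on boundaries, then feed this into the inductive characterization of topological dimension; for $\dim_{tC}X\leq\dim_C X$ you pull back a basis under a quasisymmetric homeomorphism and reduce everything to the universal bound $\dim_{tH}f(X)\leq\dim_H f(X)$. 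You are right that this last bound is the only genuinely nontrivial ingredient; it is precisely a theorem of Balka, Buczolich, and Elekes~\cite{BBE2}, proved by the Eilenberg-type coarea inequality you describe applied to the $1$-Lipschitz function $z\mapsto d(y,z)$, so deferring to them is legitimate. Two small points to tidy if you write this out in full: the slicing statement as phrased only makes sense when $\dim_H f(X)\geq 1$ (when $\dim_H f(X)<1$ almost every metric sphere is empty and $\dim_{tH}f(X)=0$, so the bound holds trivially), and since $\dim_t$ is integer-valued while $d$ is real, the step from $\dim_t\partial U\leq d-1$ to $\dim_t X\leq d$ should pass through $\lfloor d-1\rfloor$ before invoking the inductive definition.
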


A \textit{Jordan arc} is an arc of a Jordan curve; that is, a homeomorphic image of $[0,1]$ with the usual topology.

Finally, we will need the notion of uniform perfectness of a metric space.  In some sense, this condition eliminates the possibility of ``large gaps" in a space.  The following definition can be found for example in \cite{JH}.  

\begin{definition}
A metric space $X$ is called \textit{uniformly perfect} if there is a constant $C \geq 1$ so that for each $x \in X$ and for each $r > 0$ the set $B(x, r) \setminus B(x, \nicefrac{r}{C})$ is nonempty whenever the set $X \setminus B(x, r)$ is nonempty.  (For the sake of definiteness, assume here that the balls are open.)
\end{definition}

\section{Rickman's Rug}\label{rugs}

Let $\eps\in (0,1)$.  The \textit{snowflake mapping}  $([0,1],|\cdot|)\rightarrow ([0,1],|\cdot|^{\eps})$ is the identity mapping.  It is quasisymmetric~\cite{JH}, and we write $[0,1]^{\eps}$ for the target space.  It is readily seen that $\dim_H \left([0,1]^{\eps}\right)=\eps^{-1}.$  Regardless of the choice of $\eps\in(0,1),$ one has $\dim_C \left([0,1]^{\eps}\right) = 1$ since the inverse of a quasisymmetric map is again quasisymmetric.  Equivalently, one can obtain the metric space $[0,1]^{\eps}$ by choosing an appropriate scaling factor and following the construction of the classical von Koch snowflake.  From this point forward, when the value $\eps\in (0,1)$ is unimportant for our discussion, we will write $V=[0,1]^{\eps}$ and refer to $R=V\times [0,1]$ as \textit{Rickman's rug}.  We use the term \textit{fractal rug} for a product space of the form $R_d=V_d\times [0,1],$ where $V_d$ is a Jordan arc with $d=\dim_C V_d,~ d\geq 1$.  As usual, this product is equipped with the metric
\begin{equation*}
d((x_1,y_1),(x_2,y_2))=\max (|x_1 - x_2|^{\eps}, |y_1 - y_2|).
\end{equation*}
The case $\eps=\frac{\ln(3)}{\ln(4)}$ corresponds to the aforementioned von Koch snowflake curve.

Since $R$ is homeomorphic to $[0,1]^2,~\dim_t R=2.$  Tukia proved that $R$ is not quasisymmetrically equivalent to $[0,1]^2$~\cite{MT}.  In fact,  Example 4.1.9 in \cite{MT} shows that $R$ is minimal for conformal dimension, meaning $\dim_C R = \dim_H R = 1+\eps^{-1}$, where the last equality follows from Theorem 4.2 in \cite{BBE2}.  We can compute the tH and tC dimensions of $R.$  Here is a simple way to compute the tC-dimension of $R.$
\begin{fact}\label{tcrug}
$\dim_{tC} R=2.$
\end{fact}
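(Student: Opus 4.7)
The plan is to show $\dim_{tC} R \geq 2$ and $\dim_{tC} R \leq 2$ separately. For the lower bound, since $R$ is homeomorphic to $[0,1]^2$, we have $\dim_t R = 2$, and Proposition \ref{my_prop} gives $\dim_{tC} R \geq \dim_t R = 2$.

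For the upper bound, I would take the product basis $\mathcal{U} = \{U_V \times I\}$, where $U_V$ and $I$ range over open subintervals of $V$ and $[0,1]$, respectively. Each boundary $\partial U = (\partial U_V \times \bar I) \cup (\bar U_V \times \partial I)$ is a topological Jordan curve (or Jordan arc, if $U$ abuts the boundary of $R$) built from up to two ``Euclidean'' vertical segments $\{v_i\} \times \bar I$ and up to two ``snowflake'' horizontal arcs $\bar U_V \times \{t_j\}$ meeting at corners. It then suffices to show $\dim_C \partial U \leq 1$ for each $U \in \mathcal{U}$.

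The main step is to verify that $(\partial U, d_R)$ is doubling and satisfies the bounded-turning (three-point) condition; then the Tukia--V\"ais\"al\"a theorem on quasisymmetric parametrizations gives that $\partial U$ is quasisymmetrically equivalent to $S^1$ (or to $[0,1]$ in the degenerate arc case), and since $\dim_C$ is a quasisymmetric invariant this yields $\dim_C \partial U = 1$. Doubling is free: $V$ and $[0,1]$ are both doubling metric spaces, so the max-product $R$ is doubling, and hence so is every subspace. For bounded turning, I would show that for any $a, b \in \partial U$ the shorter of the two subarcs joining them has $d_R$-diameter comparable to $d_R(a, b)$; using that $d_R((v_1, t_1), (v_2, t_2)) = \max(|v_1 - v_2|^{\eps}, |t_1 - t_2|)$ and that the $d_R$-diameter of each axis-aligned subarc is the same $\max$ formula applied to its horizontal and vertical spans, a case analysis over the relative positions of $a$ and $b$ (same edge, adjacent edges sharing a corner, opposite edges) yields the three-point condition with constant $1$.

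The main obstacle I anticipate is keeping this case analysis uniform in the aspect ratio of $U$ and in the proximity of $a, b$ to corners, where the snowflake and Euclidean scales interact through the $\max$; in particular, one must confirm that the ``wrong'' subarc (through the far corner) never has to be chosen when $a, b$ are close together on different edges near a shared corner. Once bounded turning is verified and $\partial U$ is shown to be quasisymmetrically equivalent to $S^1$, the upper bound $\dim_{tC} R \leq 2$ follows, matching the lower bound and yielding $\dim_{tC} R = 2$.
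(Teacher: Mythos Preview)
Your lower bound is exactly the paper's. For the upper bound the paper simply invokes Theorem~3.7 of \cite{CAD}, which for a Jordan arc $V$ gives $\dim_{tC}(V\times[0,1])\le \dim_C V+1$; since $\dim_C V=1$ here, $\dim_{tC}R\le 2$ is immediate. Your route via an explicit product basis and the Tukia--V\"ais\"al\"a characterization of quasicircles is more self-contained and essentially unpacks what that cited theorem does in this special case.

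One correction, though: the bounded-turning constant of $\partial U$ is \emph{not} $1$. If $U=U_V\times I$ has snowflake width $W=|v_2-v_1|^{\eps}$ and height $H=|t_2-t_1|$, and $a,b$ lie on the two opposite vertical sides at mid-height, then $d_R(a,b)=W$ while both subarcs of $\partial U$ joining them have diameter at least $H/2$; so the constant is at least $H/(2W)$ and blows up with the aspect ratio. This does not actually damage your argument, and your worry about uniformity is misplaced: the definition of $\dim_{tC}$ only requires $\dim_C\partial U\le 1$ for each $U$ in the basis \emph{individually}, so a $U$-dependent bounded-turning constant is fine---each $\partial U$ is still doubling and bounded turning, hence quasisymmetric to $S^1$ (or to $[0,1]$ in the degenerate cases), giving $\dim_C\partial U=1$. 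If you prefer a uniform constant, restrict your basis to open $d_R$-balls in $R$: these are products $B_V(v,r)\times B_{[0,1]}(t,r)$ whose two factors have comparable $d_R$-diameter, so the aspect ratio---and hence the bounded-turning constant---is controlled independently of $U$.
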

\begin{proof}
Since $V$ is a Jordan arc, Theorem 3.7 in \cite{CAD} implies $\dim_{tC} R\leq 2.$ The reverse inequality holds since $2=\dim_t R\leq \dim_{tC} R$ by Proposition \ref{my_prop}.
\end{proof}

It is not clear how to compute the topological conformal dimension of more general fractal rugs.  The difficulty in determining $\dim_{tC} R_d$ lies in giving a non-trivial lower bound.  Theorem 3.7 in \cite{CAD} yields the upper bound $\dim_{tC} R_d \leq d+1,$ but a lower bound takes into account the conformal dimension of the boundary of an arbitrary open subset of $R_d,$ which can be quite bizarre.

In view of Fact \ref{tcrug}, Rickman's rug cannot be used to answer Conjecture \ref{conjecture1}.  To accomplish that goal, one needs a more general construction.  One approach is to try to compute $\dim_{tC} R_d$ for $d>1,$ but in order to do this, one first needs to construct $V_d$ with $d>1.$  The idea of the following conjecture is to prescribe a number $c\geq 1,$ then use Theorem~\ref{lemma1} to obtain $V_{c-1}$ and ultimately show that $\dim_{tC} R_{c-1}=c.$
\begin{conjecture}\label{tc_rug_conjecture}
For any $c\geq 1$ there is a Jordan arc $V_{c-1}$ such that $\dim_{tC} (V_{c-1}\times [0,1])=c.$
\end{conjecture}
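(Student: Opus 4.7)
The plan is to apply Theorem~1.1 with parameter $c - 1$ to produce a Jordan arc $V_{c-1}$ with $\dim_C V_{c-1} = c - 1$, then verify $\dim_{tC}(V_{c-1} \times [0,1]) = c$. (This approach requires $c \geq 2$; the case $c = 1$ of the conjecture is in fact incompatible with Proposition~\ref{my_prop}, since $\dim_t(V_0 \times [0,1]) = 2 > 1$ for any Jordan arc $V_0$.) Writing $R_{c-1} = V_{c-1} \times [0,1]$, the upper bound $\dim_{tC} R_{c-1} \leq c$ follows immediately from Theorem~3.7 of \cite{CAD}, as noted in the discussion before Fact~\ref{tcrug}, so the whole weight of the problem lies in the matching lower bound $\dim_{tC} R_{c-1} \geq c$.

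For the lower bound I would model the argument on the Balka--Buczolich--Elekes proof of Theorem~\ref{dim_H_additive}, substituting conformal dimension for Hausdorff dimension throughout. Let $\mathcal{U}$ be any basis of $R_{c-1}$ satisfying $\dim_C \partial U \leq d - 1$ for every $U \in \mathcal{U}$; the task is to force $d \geq c$. Choose a small $U_0 \in \mathcal{U}$ containing an interior point $(v_0, y_0)$ whose closure misses the two caps $V_{c-1} \times \{0,1\}$. For each $y$ near $y_0$, the set $\partial U_0 \cap (V_{c-1} \times \{y\})$ must separate the horizontal slice $V_{c-1} \times \{y\}$, and so should contain a continuum that inherits the fractal structure of $V_{c-1}$ at that level; since $V_{c-1}$ is minimal for conformal dimension by construction, such a continuum has $\dim_C \geq c - 1$. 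A Fubini-style accumulation of these slice-wise bounds over a positive-length interval of $y$-values would then yield $\dim_C \partial U_0 \geq c - 1$, and hence $d \geq c$.

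The main obstacle is precisely this accumulation step. Hausdorff dimension obeys a Fubini-type slicing inequality that Balka, Buczolich, and Elekes exploited in proving Theorem~\ref{dim_H_additive}, but conformal dimension admits no analogous estimate, because an optimizing quasisymmetric map need not respect the product decomposition of $R_{c-1}$ and can scramble information across slices. A natural substitute would be Tyson's curve-modulus characterization (Proposition~4.1.3 in \cite{MT}): one would assemble a diffuse curve family inside $\partial U_0$ by concatenating the slice-wise separating continua across the vertical direction, and then bound the modulus of this family from below in the appropriate gauge. Extracting such a diffuse family from the boundary of an arbitrary basis element, rather than merely from product-type open sets $A \times B \subset V_{c-1} \times [0,1]$, is the technical heart of Conjecture~\ref{conjecture1}, and I expect this to be the principal difficulty in any attempted proof.
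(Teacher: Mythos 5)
The statement you were asked to prove is Conjecture~\ref{tc_rug_conjecture}: the paper does not prove it, and in fact explicitly states after Corollary~\ref{R_d_minimal} that no non-trivial lower bound for $\dim_{tC}R_d$ is known. So there is no proof in the paper to compare yours against; what the paper establishes is only the ingredient you invoke at the outset (Theorem~\ref{lemma1}, giving the arc $V_{c-1}$ when $c\geq 2$) together with the upper bound $\dim_{tC}R_{c-1}\leq\dim_C R_{c-1}=\dim_H R_{c-1}=c$ via Corollary~\ref{R_d_minimal} and Proposition~\ref{my_prop} (or Theorem 3.7 of \cite{CAD}). Your remark about $c=1$ is correct and consistent with the paper's own framing: any rug over a Jordan arc satisfies $\dim_{tC}\geq\dim_t=2$, and no Jordan arc has conformal dimension below $1$, so the conjecture is meaningful only for $c\geq 2$.

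Beyond the gap you acknowledge, the slice-wise step of your proposed lower bound is already false. A horizontal slice $V_{c-1}\times\{y\}$ is homeomorphic to $[0,1]$, so it is separated by a single point; consequently $\partial U_0\cap(V_{c-1}\times\{y\})$ need not contain any continuum, let alone a set of conformal dimension at least $c-1$. For instance, basis elements of product form $A\times B$ with $A$ a relatively open subarc of $V_{c-1}$ meet each horizontal slice in at most two boundary points. The Balka--Buczolich--Elekes lower bound behind Theorem~\ref{dim_H_additive} instead uses \emph{vertical} slices $\{v\}\times[0,1]$, which must meet $\partial U$, and then the Lipschitz projection onto the first factor, using that Hausdorff dimension does not increase under Lipschitz maps; it is precisely this projection step that has no conformal-dimension analogue, since conformal dimension is not controlled under Lipschitz images, and the only known substitute (diffuse families of curves or surfaces, Proposition 4.1.3 in \cite{MT} and Theorem 4.5 in \cite{CAD}) requires structure in $\partial U$ that an arbitrary basis element need not supply. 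So your closing paragraph correctly identifies where the open problem lies, but the preceding slice argument would have to be abandoned rather than ``accumulated,'' and the proposal does not establish the conjecture---which the paper also leaves open, suggesting only the alternative route through $\dim_{tH}$, inequality \eqref{th_equation}, and the final Question of Section~\ref{arcs}.
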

This conjecture seems reasonable if one hopes to prove it by showing that $\dim_H R_{c-1}=\dim_{tC} R_{c-1}.$  In particular, it would follow from Proposition \ref{my_prop} that $\dim_C R_{c-1}=\dim_H R_{c-1}=c.$

\section{Jordan Arcs of Prescribed Conformal Dimension}\label{arcs}

In this section we show that for any number $c\geq 1$ there is a Jordan arc with conformal dimension $c.$  The following is a modest yet useful remark on Cantor sets that will help us accomplish this task.

\begin{remark}\label{r1}
For any $a\in[0,\infty]$ there is a Cantor type set $K_a\subset [0,1]^n$ with $\dim_H K_a=a$ for large enough $n.$  For instance, if $N$ is the least positive integer such that $b=\frac{a}{N}<1$, let $K_b\subset [0,1]$ be a Cantor set with $\dim_H K_b=b$.  The set $K_b$ can be obtained in the following way \cite{PM}.  Let $0<r<\frac{1}{2}$ be such that $b=\frac{\ln(2)}{\ln(\frac{1}{r})}.$  Denote $I_{0,1} = [0,1],$ and let $I_{1,1}$ and $I_{1,2}$ be the intervals $[0,r]$ and $[0, 1-r],$ respectively.  We continue this process of selecting two subintervals of each already given interval.  If we have defined intervals $I_{k-1, 1}, \dots, I_{k-1, 2^{k-1}},$ we define $I_{k,1}, \dots, I_{k, 2^k}$ by deleting from the middle of each $I_{k-1, j}$ an interval of length $(1-2r)m(I_{k-1, j})=(1-2r)r^{k-1}.$  All the intervals $I_{k,j}$ thus obtained have length $r^k.$  It is well known that the limit set 
\begin{equation*}
K_b = \bigcap_{k=0}^{\infty} \bigcup_{j=1}^{2k} I_{k,j}
\end{equation*}
has Hausdorff dimension $b$ (see e.g. 4.10 in \cite{PM}).  Then $K_a=\prod_1^N K_b$ is a self-similar Cantor set, and $\dim_H K_a=\sum_1^N \dim_H K_b=a$ by Fact \ref{dim_H_additive_general}.
\end{remark}

\begin{theorem}\label{lemma1}
For every $c\geq 1$ there is a Jordan arc $\Lambda$ with $\dim_C \Lambda=c.$
\end{theorem}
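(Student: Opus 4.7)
The case $c=1$ is immediate via $\Lambda = [0,1]$, so assume $c>1$. The plan is to build a Jordan arc $\Lambda$ that contains an isometric copy of $C := K \times [0,1]$, where $K$ is a Cantor set of Hausdorff dimension $c-1$, and then to squeeze $\dim_C \Lambda$ between $\dim_C C$ and $\dim_H \Lambda$. By Remark~\ref{r1}, pick a self-similar Cantor set $K = K_b \times \cdots \times K_b \subset [0,1]^N$ with $\dim_H K = c-1$, where $K_b$ is the standard uniform Cantor set of dimension $b = (c-1)/N < 1$ with contraction ratio $r \in (0,1/2)$; write $M = 2^N$, so $K$ is the attractor of $M$ similarities of ratio $r$ satisfying $Mr^{c-1}=1$. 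Viewing $C \subset \mathbb{R}^{N+1}$, Theorem~\ref{dim_H_additive} gives $\dim_H C = c$, and the Bishop--Tyson theorem cited in the Preliminaries applies since $K$ is a compact subset of Euclidean space, yielding $\dim_C C = \dim_H C = c$.

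The heart of the proof is constructing a Jordan arc $\Lambda \subset \mathbb{R}^{N+1}$ with $C \subset \Lambda$ and $\dim_H \Lambda = c$. I would obtain $\Lambda$ as the image of a uniform limit $\gamma = \lim \gamma_k$, where each $\gamma_k : [0,1] \to \mathbb{R}^{N+1}$ is an injective piecewise-linear path built inductively: at stage $k$, the set $K$ is approximated by $M^k$ closed cubes $Q^k_1, \ldots, Q^k_{M^k}$ of side $r^k$, ordered by a Hilbert-style recursive rule that refines the stage-$(k-1)$ ordering within each parent cube and keeps each consecutive pair $Q^k_i, Q^k_{i+1}$ spatially adjacent. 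The curve $\gamma_k$ then snakes vertically through the strips $Q^k_i \times [0,1]$, alternating direction, and connects strip $i$ to strip $i+1$ by a short horizontal segment routed through a designated ``port'' in an auxiliary layer $[0,1]^N \times ([-\delta_k,0] \cup [1,1+\delta_k])$ for a rapidly decaying sequence $\delta_k \downarrow 0$. The connectors introduced at stage $k$ number $O(M^k)$, each of length $O(r^{k-1})$, and are routed so as not to meet $C$ or any earlier connector except at prescribed endpoints. Since each connector is a line segment and the full collection is countable, the union of connectors has Hausdorff dimension $1 \leq c$, which together with $\dim_H C = c$ gives $\dim_H \Lambda = c$.

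With $\Lambda$ in hand, Proposition~\ref{my_prop} gives $\dim_C \Lambda \leq \dim_H \Lambda = c$, while $\dim_C \Lambda \geq \dim_C C = c$ follows from monotonicity of conformal dimension under subsets: any quasisymmetric embedding of $\Lambda$ restricts to a quasisymmetric embedding of $C$ with the same gauge, so $\dim_H f(C) \leq \dim_H f(\Lambda)$ for every admissible $f$. Hence $\dim_C \Lambda = c$, as claimed. The main obstacle is the combinatorial construction in the previous paragraph: producing a self-similar Hilbert-style ordering of level-$k$ cubes when $N \geq 2$ (for $N=1$ this is the familiar left-to-right zig-zag) and arranging the connectors so that the limit parametrization $\gamma$ is genuinely injective, rather than merely a continuous surjection from $[0,1]$ onto a Peano continuum. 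The port-layer scheme is intended precisely to keep connectors from distinct stages in disjoint slabs of $\mathbb{R}^{N+1}$, but a careful verification that the successive $\gamma_k$ remain bi-Lipschitz near $C$ and that limits of distinct parameter sequences have distinct images is the technical crux.
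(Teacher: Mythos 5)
Your plan fails at its central step, and the failure is topological, not technical. You want a Jordan arc $\Lambda$ containing $C=K\times[0,1]$ with $K$ an uncountable Cantor set, but no such arc exists: if $h:[0,1]\to\Lambda$ were a homeomorphism, then for each $x\in K$ the set $h^{-1}(\{x\}\times[0,1])$ would be a compact, connected, nondegenerate subset of $[0,1]$, i.e.\ a nondegenerate closed interval, and these intervals would be pairwise disjoint for distinct $x\in K$. An uncountable family of pairwise disjoint nondegenerate intervals in $[0,1]$ is impossible (each contains a rational). Concretely, this is why the ``technical crux'' you flag at the end --- verifying that the limit of your snaking parametrizations $\gamma_k$ is injective --- cannot be carried out: the uniform limit of curves that snake vertically through the strips $Q_i^k\times[0,1]$ has image containing $K\times[0,1]$, so it is forced to be a Peano-type continuum parametrized non-injectively, exactly as in the classical space-filling constructions your scheme imitates. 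The Bishop--Tyson minimality of $K\times[0,1]$ is correct, and your monotonicity argument $\dim_C\Lambda\geq\dim_C C$ is fine, but the containment they are meant to apply to can never hold.

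The paper's proof is designed precisely to dodge this obstruction. Instead of an interval factor, it uses a product of two Cantor sets, $E\times Y$, where $Y$ is self-similar with $\dim_H Y=d=c-1$ and $E\subset[0,1]$ is a Cantor set built from ratios $c_i\searrow 0$, so that $\dim_H E=1$ and $E$ carries measures satisfying $C^{-1}r^{1+\eps}\leq\mu(B(x,r))\leq Cr^{1-\eps}$ for every $\eps>0$. Since $E\times Y$ has no interval factor, Bishop--Tyson is unavailable; the lower bound $\dim_C(E\times Y)\geq 1+d$ comes instead from Hakobyan's criterion (Corollary 5.6 in \cite{HH}), which is what Lemma \ref{lemma2} verifies. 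Because $E\times Y$ is a totally disconnected compact set, it genuinely can be swallowed by a Jordan arc, and Lemma \ref{lemma3} threads disjoint smooth connectors through the generations of $E\times Y$ and proves the limit parametrization is injective --- which is possible there exactly because no nondegenerate continuum of $E\times Y$ must lie on the arc. If you want to salvage your argument, you must replace $K\times[0,1]$ by such a Cantor-by-Cantor product and replace Bishop--Tyson by a result of Hakobyan type giving a lower bound for the conformal dimension of a product of two Cantor sets.
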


For $c=1$ put $\Lambda=[0,1].$  We will need several lemmas to verify the case $c>1$ in Theorem \ref{lemma1}.  The result will be shown for any number $c=1+d,~d>0.$

In general, given a sequence of ratios $c_i \searrow 0$ with $\sum c_i < \infty,$ a Cantor set $E$ can be constructed as follows.  Begin with $[0,1]$ and remove the middle $c_1$st part to get two intervals of equal length.  Continuing this process, on the $i$th step, removing the middle $c_i$th part of each interval yields $2^i$ intervals of equal length.  Call the union of intervals resulting from the $i$th step $E_i.$  The resulting Cantor set $E = \bigcap E_i$ plays a pivotal role in Lemma $\ref{lemma2}.$  

We will also make use of Corollaries 3.3 and 5.6 in \cite{HH} to prove Lemma \ref{lemma2}, which are included here for sake of completeness.

\begin{corollary}[Hakobyan]\label{HHcor3.3}
	Suppose $E \subset \mathbb{R}$ is a middle interval Cantor set:
	\begin{enumerate}[(i)]
		\item If $E$ is uniformly perfect, then it is minimal for conformal dimension if and only if $\dim_H E = 1.$
		\item If $\dim_H E = 1,$ then $\dim_H f(E) \geq 1$ whenever $f$ extends to a quasisymmetric map of a uniformly perfect space.
	\end{enumerate}
\end{corollary}

\begin{corollary}[Hakobyan]\label{HHcor5.6}
	Suppose $E \subset \mathbb{R}$ is a set of conformal dimension 1 which supports a measure $\lambda_E$ such that for every $\varepsilon > 0,$ there is a constant $C$ so that whenever $x \in E$ and $R < \diam E$
	\begin{equation*}
	\frac{1}{C} R^{1 + \eps} \leq \lambda_E ( B(x, R) \cap E) \leq C R^{1 - \eps}.
	\end{equation*}
	Then for every Borel set $Y \subset \mathbb{R}^n$, 
	\begin{equation*}
	\dim_C (E \times Y) \geq \dim_H (E \times Y).
	\end{equation*}
\end{corollary}

\begin{lemma}\label{lemma2}
 Suppose $0<d<\infty.$  Let $E$ be the Cantor set constructed from the sequence of ratios $\{c_i\}_{i=1}^{\infty}$ where $c_i\searrow 0$ as $i\rightarrow \infty,~\sum c_i < \infty,$ and let $Y=K_d$ be the self-similar Cantor set with $\dim_H Y=d$ as in Remark \ref{r1}.  Then $\dim_C (E\times Y)=1+d.$
\end{lemma}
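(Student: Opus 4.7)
The plan is to bound $\dim_C(E\times Y)$ from both sides. For the upper bound, because $c_i\searrow 0$ the proportion of each parent interval retained at each level tends to $1$, so a standard mass-distribution estimate gives $\dim_H E = \overline{\dim}_B E = 1$. Since $Y=K_d$ is a finite product of uniform Cantor sets by Remark~\ref{r1}, $\dim_H Y = \overline{\dim}_B Y = d$, so Corollary 7.4 of \cite{KF} yields $\dim_H(E\times Y) = 1 + d$, and Proposition~\ref{my_prop} then gives $\dim_C(E\times Y) \leq 1 + d$.

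For the lower bound, the goal is to show that every quasisymmetric image $Z=f(E\times Y)$ satisfies $\dim_H Z \geq 1+d$. I would treat $E$ as a ``thick replacement for $[0,1]$'' in the Bishop--Tyson theorem that $Y\times[0,1]$ is minimal for conformal dimension. Each horizontal slice $E\times\{y\}$ is an isometric copy of $E$, and Kovalev's theorem combined with $\dim_H E = 1$ forces $\dim_C E \in \{0,1\}$; the thickness $c_i\searrow 0$ rules out the $0$ case (in the spirit of Hakobyan's theorem \cite{HH}), so $\dim_C E = 1$. Consequently $\dim_H f(E\times\{y\})\geq 1$ for every $y\in Y$. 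One then wants to aggregate this bound over the parameter $y$---whose range has Hausdorff dimension $d$---to conclude $\dim_H Z \geq 1+d$.

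The main obstacle is this aggregation step, because under a merely quasisymmetric $f$ the standard Fubini/Marstrand slicing estimates do not directly apply. My first attempt would be to produce a diffuse family of ``Cantor-type slices'' in $E\times Y$ whose discrete (Keith--Laakso) modulus yields a Bishop--Tyson-style lower bound, playing the role that the family of vertical line segments plays in Proposition 4.1.3 of \cite{MT}. Failing that, one could try to build an Ahlfors $(1+d)$-regular measure on a suitable self-similar subset of $E\times Y$ and invoke an Ahlfors-regular lower bound for $\dim_C$. Either route boils down to verifying that the totally disconnected factor $E$ contributes a full unit of conformal dimension to the product, substituting for the genuine interval used in the classical Bishop--Tyson argument.
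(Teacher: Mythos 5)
Your upper bound matches the paper's (product formula from \cite{KF} plus Proposition \ref{my_prop}), and your observation that $\dim_C E=1$ is also where the paper starts (it proves $E$ is uniformly perfect, using $\sup_i c_i<1$, and then cites Hakobyan rather than Kovalev). But the lower bound, which is the whole content of the lemma, is left open in your proposal: you correctly identify that knowing $\dim_H f(E\times\{y\})\geq 1$ for each slice cannot simply be ``aggregated'' over a $d$-dimensional parameter set $Y$ (disjoint unions of uncountably many sets of dimension $1$ can still have dimension $1$, and Fubini/Marstrand arguments do not survive a quasisymmetric change of metric), and then you stop at two tentative routes. Route (b) does not work as stated: Ahlfors regularity by itself gives no lower bound on conformal dimension --- the middle-thirds Cantor set is Ahlfors regular yet has conformal dimension $0$ --- so producing an Ahlfors $(1+d)$-regular subset of $E\times Y$ proves nothing; lower bounds require a modulus-type estimate on a family of slices or curves, not regularity of a single measure. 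Route (a) is in the right spirit, but since the slices $E\times\{y\}$ are totally disconnected, the curve-family machinery of Bishop--Tyson and Proposition 4.1.3 of \cite{MT} does not apply; making it work is precisely Hakobyan's Fuglede-modulus theory for measures, which is a substantial theory you would be rebuilding rather than a detail to fill in.

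The paper closes this gap by quoting Corollary 5.6 of \cite{HH} as a black box: if $E$ is uniformly perfect and carries a measure $\mu$ such that for every $\eps>0$ there is $C$ with $C^{-1}r^{1+\eps}\leq\mu(B(x,r))\leq Cr^{1-\eps}$ for $x\in E$, $r<\diam E$, then $\dim_C(E\times Y)\geq 1+\dim_H Y$. The substantive work in the paper's proof --- entirely absent from your proposal --- is the verification of this measure condition: one takes $\mu$ as a weak limit of the uniform probability measures on the generations $E_k$, so $\mu(I_{k,j})=2^{-k}$, and uses $s_k=2^{-k}\prod_{i\le k}(1-c_i)$ together with $c_i\searrow 0$ to show $\mu(B(x,r))\geq r/2$ and $\mu(B(x,r))\leq 3\cdot 2^{-(k-1)}\leq C s_k^{1-\eps}\leq Cr^{1-\eps}$ (the hypothesis $c_i\to 0$ enters exactly here, via the ratio test showing $2^{-k\eps}\prod_{i\le k}(1-c_i)^{-(1-\eps)}$ stays bounded). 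So either you invoke Hakobyan's result and supply this measure estimate, or you must genuinely develop a slice-modulus argument of your own; as written, the proposal names the obstacle but does not overcome it.
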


\begin{proof}
We will show that $E$ satisfies the conditions of Corollary \ref{HHcor5.6} and the result will follow.  First let us show that $E$ is uniformly perfect.  Since $\dim_H E=1,$ Corollary \ref{HHcor3.3} will then imply $\dim_C E=1.$  To this end, let $x\in E$ and $r>0.$  Write $B(x,r)\cap E=B(x,r)$ for the open ball.  Then for large enough $k$ there is a $k$th generation interval $I_{k,j}$, for some $j\in\{1,\dots,2^k\}$, such that $x\in I_{k,j}\subset B(x,r).$  Choose the smallest such $k.$  Then the length of $I_{k,j}$ is
\begin{equation}\label{1}
m(I_{k,j})=s_k=\frac{\prod_{i=1}^k (1-c_i)}{2^k},
\end{equation}
and $s_k<r\leq s_{k-1}.$  Say $I_{k,j}=[a,b]$ so that $a,b\in E.$  Then at least one of $|x-a|\geq \frac{s_k}{2}$ and $|x-b|\geq \frac{s_k}{2}$ holds.  Say $|x-a|\geq \frac{s_k}{2}.$  By \eqref{1},
\begin{equation}\label{2}
\frac{s_{k-1}}{s_k}=\frac{2^k\prod_1^{k-1}(1-c_i)}{2^{k-1}\prod_1^k (1-c_i)}=\frac{2}{1-c_k}\leq\frac{2}{1-\sup_i c_i}=K.
\end{equation}
Inequality \eqref{2} yields
\begin{equation}\label{3}
|x-a|\geq\frac{s_k}{2}\geq\frac{s_{k-1}}{2K}\geq\frac{r}{2K}.
\end{equation}
In fact, $a \in B(x, r) \setminus B(x, \nicefrac{r}{2K})$.  By \eqref{3} we have $a\in B(x,r)\setminus B(x,\frac{r}{4K})\neq\varnothing$, and hence $E$ is uniformly perfect.  Since $\dim_H E=1,$ Corollary 3.3 in \cite{HH} gives $\dim_C E=1.$  That is, $E$ is minimal for conformal dimension.

To satisfy Corollary \ref{HHcor5.6} it remains to show that $E$ supports a measure $\mu$ such that for every $\eps>0$ there is a constant $C$ so that whenever $x\in E$ and $r<\text{diam} E$,
\begin{align*}
\frac{r^{1+\eps}}{C}\leq\mu (B(x,r))\leq Cr^{1-\eps}.
\end{align*}
Write $E=\bigcap_k E_k$ where $E_k=\bigcup_{j=1}^{2^k} I_{k,j}$ are the intervals used to construct $E.$  Let $\mu_k$ be the probability measure supported on $E_k$ that gives equal weight to each $I_{k,j},j=1,\dots 2^k.$  Since $E$ is compact there is a subsequence $\mu_{k_i}\rightarrow \mu$ where $\mu$ is a probability measure supported on $E.$  In particular $\mu(I_{k,j})=\mu_k(I_{k,j})=\frac{1}{2^k}$ for all $k,j.$  Let $\eps>0,~x\in E$ and $0<r<\text{diam}E.$  Choose $k$ in the same manner as in the proof of uniform perfectness of $E.$  For some $j\in \{1,\dots,2^k\}$ we have $x\in I_{k,j}\subset B(x,r).$  Since $s_{k-1}\geq r$,
\begin{equation}\label{4}
2^{k-1}=\frac{\prod_{i=1}^{k-1} (1-c_i)}{s_{k-1}}\leq \frac{1}{r},
\end{equation}
so by \eqref{4}
\begin{equation}\label{5}
\mu(B(x,r))\geq\mu(I_{k,j})=\mu_{k}(I_{k,j})=\frac{1}{2^k}\geq\frac{r}{2}.
\end{equation}
By choice of $k$ it follows from \eqref{5} that at most three intervals of generation $k-1$ intersect $B(x,r),$ each with $\mu(I_{k-1,j})=\frac{1}{2^{k-1}}.$  Therefore $\mu(B(x,r))\leq 3\mu(I_{k-1,j})=\frac{3}{2^{k-1}}.$  Since $s_k<r$ it suffices to show that there is a constant $C$ such that
\begin{align*}
\frac{3}{2^{k-1}}\leq Cs_k^{1-\eps}=\frac{C\left(\prod_{i=1}^k (1-c_i)\right)^{1-\eps}}{2^{k(1-\eps)}}
\end{align*}
That is, we must show that there is $C$ such that $a_k\leq C$, where
\begin{equation}\label{6}
a_k=\frac{6(2^{-k\eps})}{\left(\prod_{i=1}^k(1-c_i)\right)^{1-\eps}}.
\end{equation}
Note that \eqref{6} implies $\frac{a_{n+1}}{a_n}=\frac{2^{-\eps}}{(1-c_{n+1})^{1-\eps}}\rightarrow 2^{-\eps}<1$ so that $\sum a_n<\infty$ and hence $a_n\rightarrow 0.$  In particular, $a_n$ is bounded so say $a_k\leq C$ for all $k.$  Finally, by inequality \ref{5} and the fact that $r \leq 1$,
\begin{equation}\label{7}
\frac{r^{1+\eps}}{2}\leq \mu(B(x,r))\leq Cr^{1-\eps},
\end{equation}
and by \eqref{7} there is a constant $K$ such that $\frac{1}{K}r^{1+\eps}\leq \mu(B(x,r)\cap E)\leq Kr^{1-\eps}.$  This shows that Corollary \ref{HHcor5.6} is satisfied so that
\begin{equation*}
\dim_C (E\times Y)\geq 1+\dim_H Y=1+d.
\end{equation*}
Since $Y$ is a product of uniform Cantor sets, Fact \ref{dim_H_additive_general} yields
\begin{align*}
\dim_C (E\times Y) &\leq \dim_H (E\times Y) \\
&= \dim_H E+\dim_H Y \\
&= 1+d.
\end{align*}
Therefore $\dim_C (E\times Y)=1+d.$
\end{proof}

In \cite{GV}, Gehring and V{\"a}is{\"a}l{\"a} constructed a quasiconformal mapping $f:\mathbb{R}^n\rightarrow\mathbb{R}^n$ which maps one $n$-dimensional Cantor set onto another.  Their construction involves a sequence of piecewise linear mappings, and we use that idea to produce a Jordan arc containing a (sufficiently large) product of Cantor sets.

\begin{lemma}\label{lemma3}
Let $E$ and $Y\subset [0,1]^n$ be as in Lemma \ref{lemma2}. There is a Jordan arc $\Lambda\subset [0,1]^{n+1}$ such that $\Lambda\supset (E\times Y).$
\end{lemma}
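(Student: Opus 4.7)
The plan is to construct $\Lambda$ as the uniform limit of a sequence of polygonal Jordan arcs $\gamma_k : [0,1] \to [0,1]^{n+1}$ adapted to the nested approximations of $E\times Y$. Write $E_k = \bigcup_j I_{k,j}$ and $Y_k = \bigcup_\alpha Q_{k,\alpha}$ for the stage-$k$ approximations, so that $E_k \times Y_k$ is a disjoint union of closed ``boxes'' $B_{k,j,\alpha} = I_{k,j} \times Q_{k,\alpha} \subset [0,1]^{n+1}$. First I fix a ``snake'' enumeration of these boxes refining the stage-$(k-1)$ enumeration, in which sub-boxes of a common parent are listed consecutively and successive sub-boxes are geometric neighbors. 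Correspondingly, I partition $[0,1]$ into pairwise disjoint ``time slots'' indexed by the boxes, in snake order, with stage-$(k+1)$ slots refining stage-$k$ slots.

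For the inductive step, suppose $\gamma_k$ has been built so that for every stage-$k$ box $B_{k,j,\alpha}$, $\gamma_k$ maps its time slot into a small neighborhood $N_{k,j,\alpha}$ of $B_{k,j,\alpha}$, with these neighborhoods pairwise disjoint. Inside the time slot of $B_{k,j,\alpha}$, I replace $\gamma_k$ by a polygonal path $\gamma_{k+1}$ that visits each stage-$(k+1)$ sub-box of $B_{k,j,\alpha}$ in the prescribed order while staying in a slight enlargement of $B_{k,j,\alpha}$; because $n+1 \geq 2$, there is enough transverse room to make this path non-self-intersecting, and I choose its endpoints to match those of $\gamma_k$ so that the global map remains continuous and injective. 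Since the diameters of stage-$k$ boxes shrink to $0$, the sequence $\gamma_k$ is uniformly Cauchy and converges to a continuous $\gamma : [0,1] \to [0,1]^{n+1}$. Injectivity of $\gamma$ follows because distinct $s,t \in [0,1]$ eventually belong to different time slots whose enlarged neighborhoods are disjoint, forcing $\gamma(s) \neq \gamma(t)$; by compactness, $\gamma$ is then a homeomorphism onto $\Lambda := \gamma([0,1])$, so $\Lambda$ is a Jordan arc. Finally, each $(e,y) \in E \times Y$ corresponds to a nested sequence of boxes, whose time slots form a nested sequence of intervals with a unique common point $t^*$, and $\gamma(t^*) = (e,y)$, giving $E \times Y \subset \Lambda$.

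The main obstacle is maintaining injectivity through every refinement. Two structural choices are essential. First, the snake enumeration must be chosen so that consecutive sub-boxes of a common parent are geometric neighbors, which permits the polygonal connectors between them to be drawn short and transverse to the dominant indexing direction; combined with having at least two spatial dimensions, this allows distinct connectors to be routed without crossings. Second, the enlargement factors defining the neighborhoods $N_{k,j,\alpha}$ must shrink quickly enough relative to the gaps between sibling boxes at each stage to keep the neighborhoods pairwise disjoint. Both requirements are the same ingredients that underlie the piecewise-linear Cantor-set constructions of Gehring--V\"ais\"al\"a, adapted here to build an arc rather than an ambient homeomorphism.
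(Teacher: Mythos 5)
Your overall strategy is the same as the paper's: order the generation-$k$ boxes of $E\times Y$ along a path, assign nested parameter intervals to boxes, join consecutive boxes by short disjoint simple connectors (possible since $n+1\geq 2$), use the shrinking box diameters for continuity, and pass to a limit to get an arc containing $E\times Y$. The containment argument and the uniform-Cauchy/continuity part of your sketch are fine and parallel the paper. The gap is in the injectivity of the limit, which is precisely the delicate point of this lemma (uniform limits of injective curves need not be injective --- space-filling curves arise exactly this way). Your inductive hypothesis is internally inconsistent as stated: if the time slots indexed by the boxes partition $[0,1]$ and each slot is mapped into a neighborhood $N_{k,j,\alpha}$ of its box, with these neighborhoods pairwise disjoint, then the connected set $\gamma_k([0,1])$ would be covered by finitely many disjoint sets each meeting the image, which is impossible; the curve must spend parameter time on the connectors between consecutive (separated) boxes, and during that time it lies in no $N_{k,j,\alpha}$.

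Once you repair this by reserving parameter intervals for the connectors (as you must), your one-line injectivity argument --- ``distinct $s,t$ eventually lie in different slots with disjoint neighborhoods'' --- no longer covers the real cases: parameters that sit forever inside a connector interval of some fixed generation (these are never in any box slot at later stages), pairs with one parameter on a connector and one a limit parameter, and pairs in adjacent slots joined by a connector. This is exactly the bookkeeping the paper carries out: it distinguishes ``used'' parameter intervals (mapped onto the connector curves, which are fixed once and for all) from ``neglected'' intervals (reserved for deeper generations), proves uniform continuity of the map on the dense union of used intervals by a three-case analysis, and then proves injectivity of the extension by cases (both parameters used, both in the complement, mixed), using that two distinct limit parameters are separated by a used interval and hence land in boxes of different indices of some generation. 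Your sketch needs this case analysis, or some substitute for it, before the conclusion that $\overline{\Gamma}$ (your $\Lambda=\gamma([0,1])$) is a Jordan arc is justified.
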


\begin{proof}
For each $k\in\mathbb{N}$ we will construct curves $\Gamma_k$ such that $\Gamma=\bigcup_k\Gamma_k$ and $\Lambda=\overline{\Gamma}.$  Since $Y\subset [0,1]^n$ is a product of $n$ copies of the same Cantor set, we see that $F_1=E_1\times Y_1$ is the first generation of $E\times Y,$ where $E_1=I_{1,1}\cup I_{1,2}$ and $Y_1=\bigcup_{i_1,\dots,i_n=1,2} (J_{1,i_1}\times\dots\times J_{1,i_n}).$  Then
\begin{equation}
F_1=\bigcup_{j,i_1,\dots,i_n=1,2}I_{1,j}\times (J_{1,i_1}\times\cdots\times J_{1,i_n})
\end{equation}
is a union of $t_1=2^{n+1}$ disjoint products whose sides are rectangles.  Let us say $F_1=\bigcup_{s=1}^{t_1} Q_s^1$ where dist$(Q_1^1,0)<\text{dist}(Q_2^1,0)\leq\cdots\leq\text{dist}(Q_{t_1-1}^1,0)<\text{dist}(Q_{t_1}^1,0).$  For each $s$ there are unique points $x_s^1,y_s^1\in Q_s^1$ with $|x_s^1|=\text{dist}(Q_s^1,0)$ and $|y_s^1|=\max\{|z|:z\in Q_s^1\}.$  For $s=1,\dots,t_1-1$ there is a simple curve $\gamma_s^1$ in $[0,1]^{n+1}$ from $y_s^1$ to $x_{s+1}^1.$  Since $n+1\geq 2$ we may choose these $2^{n+1}-1$ curves to be disjoint.  In dimension 2, for example, one can see in Figure \ref{figure2} that disjointness is guaranteed because each generation of $Y$ and $E$ are composed of disjoint pieces.

Parametrize these curves by first dividing the interval $[0,1]$ into $2(2^{n+1}-1)+1=2^{n+2}-1$ subintervals of equal length.  Call them
\begin{align*}
P_j^1=\left[\frac{j}{2^{n+2}-1},\frac{j+1}{2^{n+2}-1}\right],~~0\leq j\leq 2^{n+2}-2.
\end{align*}
Choose smooth parametrized curves $\gamma_1^1,\dots,\gamma_{t_1-1}^1\subset F_1^c$ for odd $j$:
\begin{align*}
\hat{\gamma}_j^1&:P_j^1\rightarrow\gamma_j^1~~\text{(see Figure \ref{figure2} for the case}~n=1).
\end{align*}
For this we call $\{P_j^1:j~\text{odd}\}$ {\it used} and $\{P_j^1:j~\text{even}\}$ {\it neglected}.  Put $\Gamma_1=\bigcup_{j=1}^{t_1-1}\gamma_j^1.$  Note that there are $2^{n+2}-1-(2^{n+1}-1)=2^{n+1}=t_1$ neglected subintervals of $[0,1]$ after this parametrization, which is the number of products in $F_1$.  Reindex $\{P_j^1:j~\text{even}\}=\{R_s^1\}_1^{t_1}$ in increasing order of distance from $0.$

\begin{center}
\begin{figure}[h]
\includegraphics[width=0.99\textwidth]{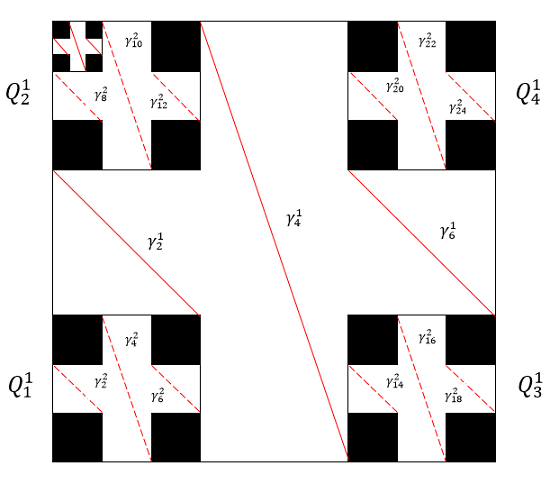} \caption[Examples of smooth curves that might comprise the first two generations of $\Gamma$]{For $n=1$ and $\dim_H Y=\frac{\ln(2)}{\ln(3)},$ these line segments are examples of smooth curves that might comprise the first two generations of $\Gamma$.  Taking the closure of the union of all such segments results in a Jordan arc with the desired conformal dimension.}\label{figure2}
\end{figure}\end{center}

For each integer $1\leq s\leq t_1$ we repeat the above path construction process for the pair $R_s^1,Q_s^1.$  Recall that $E$ is one-dimensional, and $Y$ is $n$-dimensional.  Since there are $t_1 = 2^{n+1}$ products in the first generation $\Gamma_1,$ there are $t_2 = 2^{n+1}2^{n+1} = 2^{2(n+1)}$ products in the second generation.  Each first generation product $Q_s^1$ contains $2^{n+1}$ second generation products of the form $Q_s^2.$  Within each $Q_s^1,$ we need $2^{n+1} - 1$ curves to connect the second generation products $Q_s^2.$  Since $|\{Q_s^1\}| = t_1,$ we need a total of $t_2 - t_1$ curves, so write $\Gamma_2 = \bigcup_{s = 1}^{t_2 - t_1} \gamma_s^2.$  Continuing in this fashion, it is evident that generation $k$ is composed of $t_k = 2^{k(n+1)}$ disjoint products, so that $t_k - t_{k-1}$ curves are required to connect them.  Then for each $k\in\mathbb{N},$ we have the subintervals $R_j^k,$ along with $l_k = t_k - t_{k-1}$ curves, and their union $\Gamma_k = \bigcup_{s = 1}^{l_k} \gamma_s^k.$ \\

Let $\Gamma=\bigcup_k\Gamma_k$.  It remains to show that $\overline{\Gamma}$ is a Jordan arc and that $(E\times Y)\subset \overline{\Gamma}.$  The construction of $\Gamma$ defines a function $f:D\rightarrow \Gamma$ where $D$ is dense in [0,1].  We will show that $f$ is uniformly continuous so that it extends to a continuous function $\tilde{f}:[0,1]\rightarrow\overline{\Gamma}.$  Call $x\in D$ {\it $k$-used} if $x\in \bigcup_j R_j^k$, and call $x$ {\it $k$-neglected} if $x\in \bigcap_j (R_j^k)^c.$\\

Let $\eps > 0$ and $\delta_k=m(R_1^k)$.  Take $K$ to be the smallest integer such that $\text{diam}(Q_1^K)=\dots=\text{diam}(Q_{t_K}^K)<\eps.$  Note that $\bigcup_i(\Gamma_{K+1}\cap Q_i^K) = \Gamma_{K+1}$ is composed of $l_{K+1}$ disjoint paths.  Let  $\delta'=\frac{\delta_{K+1}}{2}$ and
\begin{equation}\label{lipschitz_constant}
L_K=\max\{L_{j,k}|\hat{\gamma}_j^k ~\text{is}~ L_{j,k}\text{-Lipschitz},j~\text{even},1\leq k\leq K\}
\end{equation}
Put $\delta=\min\left\{\delta',\frac{\eps}{2 L_K}\right\}.$  If $x,y\in D$ are such that $|x-y|<\delta$ then there are three possibilities.  In any case, we must show $|f(x)-f(y)|<\eps.$\\
\begin{enumerate}
\item {\bf Both $x$ and $y$ are $(K+1)$-used.}  Since $|x - y|< \delta \leq \delta' < \delta_K$, we see that $x$ and $y$ cannot lie on opposite sides of any $K$-used subinterval.  It follows that both $x$ and $y$ must be used to parametrize curves that lie within a single $K$th generation product $Q_j^K$ for some $j.$
Therefore $|f(x)-f(y)|\leq \text{diam}(Q_j^K)=\text{diam}(Q_1^K)<\eps.$\\
\item {\bf Neither $x$ nor $y$ is $(K+1)$-used.}  Since $|x-y|<\frac{\delta_{K+1}}{2}$ it follows that either (a), (b), or (c) holds.\\
\begin{enumerate}
\item {\bf Both $x$ and $y$ are $(K+1)$-neglected.}  Note that $x,y\in R_s^{K+1}$ for some $s$.  Then $f(x),f(y)\in f(D\cap R_s^{K+1})\subset Q_s^{K+1}$ so that $|f(x)-f(y)|\leq\text{diam}(Q_1^K)<\eps.$\\
\item {\bf Both $x$ and $y$ are $k$-used for some $k\leq K$.}  Note that $x,y\in P_j^k$ for some $k\leq K.$  By \eqref{lipschitz_constant} we have
\begin{equation*}
|f(x)-f(y)|=|\hat{\gamma}_j^k(x)-\hat{\gamma}_j^k(y)|\leq  L_{j,k}|x-y|\leq  L_K\frac{\eps}{2 L_K}=\nicefrac{\eps}{2}.
\end{equation*}

\item {\bf $x$ is $k$-used for some $k\leq K$ and $y$ is $(K+1)$-neglected.}  Note that $y\in R_s^{K+1}$ for some $s.$  Without loss of generality, assume $y < x$.  Since $x$ is $k$-used, $x\in P_j^k$ for some odd $j.$  Put $P_j^k=[a,b].$  Then $f(a)$ is the corner of $Q_{j'}^k$ closest to $0.$  By construction $f(a)$ is also the corner of $Q_r^{K+1}$ closest to $0$ for some $r.$  Since $y<x$ and $y$ is $(K+1)$-neglected, $y$ is also $k$-neglected.  Then $|y-a|\leq |y-x|<\frac{1}{2}\delta_{K+1}$, and there are no $(K+1)$-used intervals between $y$ and $a.$  Therefore $f(D \cap R_s^{K+1})\subset Q_r^{K+1}$ so that
\begin{align*}
|f(y)-f(a)|\leq\text{diam}(Q_r^{K+1})<\frac{1}{2}\text{diam}(Q_1^K)<\nicefrac{\eps}{2}.
\end{align*}
Since both $a$ and $x$ are $k$-used, part (b) implies $|f(a)-f(x)|\leq\nicefrac{\eps}{2}$, so
\begin{align*}
|f(y)-f(x)|\leq |f(y)-f(a)|+|f(a)-f(x)|<\eps.
\end{align*}
\end{enumerate}

\item {\bf $x$ is $(K+1)$-used and $y$ is not $(K+1)$-used.}  Since $|x - y| < \delta_{K+1}$, it follows that $y$ is $(K+1)$-neglected.  Because $|x-y| < \delta_{K+1} < \delta_K,$ it follows that $x$ and $y$ lie in the same $K$-neglected subinterval $R_s^K$ for some $s$.  By construction  $f(D \cap R_s^K) \subset Q_i^K$ for some $i.$  Thus $|f(x)-f(y)|\leq\text{diam}(Q_i^K)<\eps.$

\end{enumerate}

So $f$ is uniformly continuous on $D$, and a continuous extension $\tilde{f}:[0,1]\rightarrow \overline{\Gamma}$ exists.  We show that $\tilde{f}$ is injective.  Let $x\neq y$ for $x,y\in [0,1].$  If $x,y\in D$ then either $f(x)$ and $f(y)$ lie on disjoint arcs so that $f(x)\neq f(y),$ or they lie on the same curve $\gamma_j^k$ in which case $f(x)\neq f(y)$ because $\hat{\gamma}_j^k$ is injective.  If $x,y\in D^c$ then there is a used interval $R_j^k$ between $x$ and $y.$  By construction, $f(x)\in Q_i^k$ and $f(y)\in Q_l^k$ for some $i\neq l$, so $f(x)\neq f(y).$  If $x\in D$ and $y\in D^c$, then there is a used interval $R_j^k$ strictly between $x$ and $y$ and the above argument implies $f(x)\neq f(y).$  Then $\tilde{f}$ is a continuous bijection whose domain is compact, so it is a homeomorphism and hence $\overline{\Gamma}$ is a Jordan arc.  To see that $(E\times Y)\subset \overline{\Gamma}$, let $z\in E\times Y$ and note that $z\in Q_{j_k}^k$ for infinitely many $k$ and $\overline{\Gamma}\cap Q_{j_k}^k\neq\varnothing$ for all $k.$  Choose $z_k\in Q_{j_k}^k\cap\overline{\Gamma}$ for each $k.$  Then $|z-z_k|\leq\text{diam}(Q_{j_k}^k)\rightarrow 0$ as $k\rightarrow\infty.$  Since $\overline{\Gamma}$ is compact, $z\in\overline{\Gamma}$, so $(E\times Y)\subset \overline{\Gamma}=\Lambda.$
\end{proof}

We now prove Theorem \ref{lemma1} with $\Lambda=\overline{\Gamma}.$

\begin{proof}[Proof of Theorem \ref{lemma1}]
By Lemmas \ref{lemma2} and \ref{lemma3} we have $\dim_C \overline{\Gamma}\geq\dim_C (E\times Y)=1+d.$  Note that $\Gamma\setminus (E\times Y)$ is a countable union of disjoint smooth curves of Hausdorff dimension 1 so that $\dim_H (\Gamma\setminus (E\times Y))=1.$  Also $\partial\Gamma\subset (E\times Y)$ so that $\overline{\Gamma}\setminus (E\times Y)=\Gamma\setminus (E\times Y).$ The stability and additivity properties of Hausdorff dimension yield
\begin{equation}\label{8}
\begin{split}
\dim_H\overline{\Gamma} &= \max\{\dim_H(\overline{\Gamma}\setminus (E\times Y)),\dim_H (E\times Y)\} \\
&= \max\{1,1+d\} \\
&= 1+d.
\end{split}
\end{equation}
It follows from \eqref{8} and the definition of conformal dimension that $\dim_C \overline{\Gamma}\leq 1+d$, and hence $\dim_C\Lambda=\dim_C \overline{\Gamma}=1+d=c.$
\end{proof}

\begin{corollary}\label{R_d_minimal}
Let $V_{c-1}$ be a Jordan arc with $\dim_C V_{c-1} = c-1,$ and let $R_{c-1} = V_{c-1} \times [0, 1].$  Then
\begin{equation*}
\dim_C R_{c-1}= \dim_H R_{c-1}=c.
\end{equation*}
\end{corollary}

Theorem \ref{lemma1} guarantees the existence of spaces $R_d = V_d \times [0,1]$, where the factor $V_d$ is a Jordan arc of prescribed conformal dimension $d$.  However, the value $\dim_{tC}R_d$ remains unknown.  Since $\dim_{tC}R_d \leq \dim_C R_d$ by Proposition \ref{my_prop}, Corollary \ref{R_d_minimal} provides a crude upper bound on $\dim_{tC}R_d$.  We do not know any non-trivial lower bounds.  Indeed, without the presence of a diffuse family of surfaces, it is difficult to determine any nontrivial lower bound on $\dim_{tC}R_d.$

\begin{question}
Determine $\dim_{tC} R_d$.
\end{question}

Topological conformal dimension and topological Hausdorff dimension are related in the following way.  For every metric space $X,$
\begin{equation}\label{th_equation}
\dim_{tC}X \leq \inf\{\dim_{tH}f(X):f~\text{quasisymmetric}\}.
\end{equation}
Question 6.4 in \cite{CAD} asks whether equality holds in \eqref{th_equation} for every $X.$  It is not clear whether the tH-dimension of $R_d$ can be lowered by quasisymmetric maps.

\begin{question}
Given $0<d<\infty,$ is there a quasisymmetric mapping $f$ such that $\dim_{tH}f(R_d)<\dim_{tH}R_d$?
\end{question}

\subsection*{Acknowledgement} This paper is based on a part of a PhD thesis written by the author under the supervision of Leonid Kovalev at Syracuse University.  The author thanks the anonymous referee for many
useful suggestions in revising this paper.

\newpage

\begin{bibdiv}
\begin{biblist}

\bib{BBE2}{article}{
   author={Balka, Richard},
   author={Buczolich, Zoltan},
   author={Elekes, Marton},
   title={A new fractal dimension: the topological Hausdorff dimension},
   journal={arXiv},
   volume ={1108.4292v2},
   date={2013},
}

\bib{BT}{article}{
   author={Bishop, Christopher J.},
   author={Tyson, Jeremy T.},
   title={Locally minimal sets for conformal dimension},
   journal={Ann. Acad. Sci. Fenn. Math.},
   volume={26},
   date={2001},
   number={2},
   pages={361--373},
   issn={1239-629X},
   review={\MR{1833245 (2002c:30027)}},
}

\bib{MB}{article}{
   author={Bonk, Mario},
   title={Quasiconformal geometry of fractals},
   conference={
      title={International Congress of Mathematicians. Vol. II},
   },
   book={
      publisher={Eur. Math. Soc., Z\"urich},
   },
   date={2006},
   pages={1349--1373},
   review={\MR{2275649 (2008e:30028)}},
}

\bib{BK}{article}{
   author={Bonk, Mario},
   author={Kleiner, Bruce},
   title={Conformal dimension and Gromov hyperbolic groups with 2-sphere
   boundary},
   journal={Geom. Topol.},
   volume={9},
   date={2005},
   pages={219--246 (electronic)},
   issn={1465-3060},
   review={\MR{2116315 (2005k:20102)}},
   doi={10.2140/gt.2005.9.219},
}

\bib{CAD}{article}{
   author={DiMarco, Claudio A.},
   title={Topological conformal dimension},
   journal={Conform. Geom. Dyn.},
   volume={19},
   date={2015},
   pages={19--34},
   issn={1088-4173},
   review={\MR{3303179}},
   doi={10.1090/S1088-4173-2015-00274-X},
}

\bib{KF}{book}{
   author={Falconer, Kenneth},
   title={Fractal geometry},
   edition={2},
   note={Mathematical foundations and applications},
   publisher={John Wiley \& Sons, Inc., Hoboken, NJ},
   date={2003},
   pages={xxviii+337},
   isbn={0-470-84861-8},
   review={\MR{2118797 (2006b:28001)}},
   doi={10.1002/0470013850},
}

\bib{GV}{article}{
   author={Gehring, F. W.},
   author={V{\"a}is{\"a}l{\"a}, J.},
   title={Hausdorff dimension and quasiconformal mappings},
   journal={J. London Math. Soc. (2)},
   volume={6},
   date={1973},
   pages={504--512},
   issn={0024-6107},
   review={\MR{0324028 (48 \#2380)}},
}

\bib{HH}{article}{
   author={Hakobyan, Hrant},
   title={Conformal dimension: Cantor sets and Fuglede modulus},
   journal={Int. Math. Res. Not. IMRN},
   date={2010},
   number={1},
   pages={87--111},
   issn={1073-7928},
   review={\MR{2576285 (2011c:30139)}},
   doi={10.1093/imrn/rnp115},
}

\bib{JH}{book}{
   author={Heinonen, Juha},
   title={Lectures on analysis on metric spaces},
   series={Universitext},
   publisher={Springer-Verlag},
   place={New York},
   date={2001},
   pages={x+140},
   isbn={0-387-95104-0},
   review={\MR{1800917 (2002c:30028)}},
   doi={10.1007/978-1-4613-0131-8},
}

\bib{KL}{article}{
   author={Keith, S.},
   author={Laakso, T.},
   title={Conformal Assouad dimension and modulus},
   journal={Geom. Funct. Anal.},
   volume={14},
   date={2004},
   number={6},
   pages={1278--1321},
   issn={1016-443X},
   review={\MR{2135168 (2006g:30027)}},
   doi={10.1007/s00039-004-0492-5},
}

\bib{LK}{article}{
   author={Kovalev, Leonid V.},
   title={Conformal dimension does not assume values between zero and one},
   journal={Duke Math. J.},
   volume={134},
   date={2006},
   number={1},
   pages={1--13},
   issn={0012-7094},
   review={\MR{2239342 (2007c:51016)}},
   doi={10.1215/S0012-7094-06-13411-7},
}

\bib{MT}{book}{
   author={Mackay, John M.},
   author={Tyson, Jeremy T.},
   title={Conformal dimension},
   series={University Lecture Series},
   volume={54},
   note={Theory and application},
   publisher={American Mathematical Society},
   place={Providence, RI},
   date={2010},
   pages={xiv+143},
   isbn={978-0-8218-5229-3},
   review={\MR{2662522 (2011d:30128)}},
}

\bib{PM}{book}{
	author={Mattila, Pertti},
	title={Geometry of sets and measures in Euclidean spaces},
	series={Cambridge Studies in Advanced Mathematics},
	volume={44},
	note={Fractals and rectifiability},
	publisher={Cambridge University Press, Cambridge},
	date={1995},
	pages={xii+343},
	isbn={0-521-46576-1},
	isbn={0-521-65595-1},
	review={\MR{1333890 (96h:28006)}},
}

\bib{PP}{article}{
   author={Pansu, Pierre},
   title={Dimension conforme et sph\`ere \`a l'infini des vari\'et\'es \`a
   courbure n\'egative},
   language={French, with English summary},
   journal={Ann. Acad. Sci. Fenn. Ser. A I Math.},
   volume={14},
   date={1989},
   number={2},
   pages={177--212},
   issn={0066-1953},
   review={\MR{1024425 (90k:53079)}},
   doi={10.5186/aasfm.1989.1424},
}

\end{biblist}
\end{bibdiv}

\end{document}